\newtheorem{theorem}{\bf Theorem}[section]
\providecommand{\algorithmname}{Algorithm}
\numberwithin{equation}{section}
\numberwithin{figure}{section}
\theoremstyle{plain}
\journal{Expert Systems With Applications}
\providecommand{\theoremname}{Theorem}
\begin{document}
%
%
\title{Hybrid evolutionary algorithm with extreme machine learning fitness function evaluation for two-stage capacitated facility location problem}

\author{Peng
Guo}
 \address{Peng Guo: School of Mechanical Engineering, Southwest Jiaotong University,
 Chengdu,  610031 China.   Email: pengguo318@gmail.com.}
\author{Wenming Cheng}
 \address{Wenming Cheng: School of Mechanical Engineering, Southwest Jiaotong University,
 Chengdu,  610031 China.   Email: wmcheng@swjtu.edu.cn.}

 \author{Yi
 Wang}
 \address{Yi Wang:  corresponding author. Department of Mathematics and Computer Science, Auburn University at Montgomery,
 P.O. Box 244023, Montgomery, AL 36124-4023
 USA.   Email: ywang2@aum.edu.}
\date{}

\begin{abstract}
This paper considers the two-stage capacitated facility location problem (TSCFLP) in which products manufactured in  plants are delivered to customers via storage depots. Customer demands are satisfied subject to limited plant   production and limited depot storage capacity.  The objective is to determine the locations of plants and depots   in order to minimize the total cost  including the fixed cost  and transportation cost. A hybrid evolutionary algorithm (HEA) with genetic operations and local search is proposed. To avoid the expensive   calculation of fitness of population   in terms of computational time, the HEA uses extreme machine learning to approximate the fitness of most of the individuals.  Moreover, two heuristics based on the characteristic of the problem is incorporated  to generate a good initial population.

Computational experiments are performed on two sets of  test instances from the recent literature.
The performance of the proposed algorithm is evaluated and analyzed.
Compared with the state-of-the-art genetic algorithm, the proposed algorithm can find  the optimal or near-optimal solutions in a reasonable computational time.
\end{abstract}
\begin{keyword}
facility location; evolutionary algorithm; fitness approximation; local search; extreme machine learning
\end{keyword}

\maketitle

\section{Introduction}
As a   strategic issue in the design of  supply chain networks, logistics facility location problems have been studied extensively in the past few decades, for example, see \citet*{Mesa1996_review,Owen1998_review,Snyder2006_review,Sahin2007_review,Melo2009401_Review}. In a facility location problem, management decision makers have to decide which sites should be chosen to establish new  facilities from a set of available candidate sites, while    constraints are met in order to  minimize the total cost. The constraints are such that the demands of all customers have to be met, the capacity limits of the suppliers and facilities must not be violated, etc. The cost includes fixed costs to open plants and   depots, and variable costs associated with  transportation.
The decision about the facility location leads to long term commitments due to the megacephalic fixed costs of these facilities. The selection of facility location will  profoundly influence the management planning of   organizations, especially in the relation between the sectors and theirs customers.

Owing to the importance of  the facility location problem, it has been widely considered in the literature. In particular, single-stage capacitated facility location problems (CFLP) have been successfully analyzed \citep*{Klose2005LBCPLP,Zhang2005MLSCFLP}. Subsequently, many effective search algorithms have been designed for their near-optimal and optimal solutions of large-sized instances, including heuristics based on Lagrangian relaxation \citep*{Avella2008HECFLP}, kernel search \citep*{Guastaroba2012KSCFLP,Guastaroba2014438SSCFLP} and cut-and-solve algorithm \citep*{Yang2012521LSCFLP}. Moreover, meta-heuristics, such as tabu search \citep*{Sun2011TSCFLP}, simulated annealing,  genetic algorithm \citep*{Arostegui2006empirical_CFLP} and firefly algorithm \citep*{Rahmani201470CFLP_HFGA} were developed to solve the CFLP.

Recently, two-stage capacitated facility location problem (TSCFLP) has attracted researchers' attention, as in many situations   more than one type of facilities are  considered simultaneously.
Due to the intractability of the TSCFLP, it is hard to obtain optimal solutions for   a large-sized instance. Therefore, some researchers focused on heuristics rather than mathematical modeling in solving the TSCFLP.
\citet*{Klose1999LPH} proposed a linear programming based heuristic for TSCFLP with single source constraints, and evaluated the proposed algorithm on a large set of test problems with up to 10 plants, 50 potential depots and 500 customers. They   presented a Lagrangian relax-and-cut procedure for the problem, and found  the proposed algorithm could yield very good solutions in the expense of  consuming a lot of effort to optimize or re-optimize the Lagrangian dual \citep*{Klose2000TSCFLP}.

In order to solve  large sized instances of the TSCFLP, \citet*{Fernandes2014200}  suggested a simple genetic algorithm (GA) to determine which plants and depots should be opened, and obtained the flow values between plants and depots and between depots and customers by solving a minimum cost flow problem.
In addition, the two-stage uncapacitated facility location problem was considered by \citet*{Marin20071126}, and its lower bounds were delivered by a relaxation formulation.
\citet*{Wang2012TSCFLP} studied a TSCFLP with fuzzy costs and demands, and developed a particle swarm optimization to solve the problem under consideration.

Although several heuristics have been developed to solve the TSCFLP, most methods are not fit for solving  large-sized problems in terms of solution quality and computational time.
For examples, the computational time of GA  varies from 236 seconds to 2784 seconds for the problem instances with 50 plants and 100 plants in \citet*{Fernandes2014200}, and
each function evaluation  involves solving a minimum cost flow problem which is computationally expensive. Specifically, the GA requires a very large number of fitness function evaluations for producing a near-optimal solution.

For most evolutionary algorithms (EAs) like the GA, a very large number of  fitness function evaluations are required before a well acceptable solution can be delivered.
This characteristic of the EAs seriously limits their applications in solving intractable high-dimensional and multimodal optimization problems.
One promising way to significantly ease the computational burden of the EAs is to adopt computationally cheap surrogate models instead of computationally expensive fitness evaluations \citep*{Jin2005FA}.
Various techniques for the construction of surrogate models (also called approximation models or meta-models) have been employed to obtain the efficient and effective hybrid EAs. Among these techniques, artificial neural network (ANN), support vector machine (SVM) and kriging models are among some of the most prominent and commonly used techniques \citep*{Hacioglu2007FEA_NN,zhang2010expensive,dias2014genetic,Zheng2015GA_SVM}.
By elaborating surrogate models, the computational burden can be greatly reduced. This is due to  the efforts  in constructing the surrogate models and then using it to predict fitness values  are much lower than the efforts of   directly calculating the fitness functions by the standard approach.
Inspired by the application of surrogate models in solving complex continuous optimization problems, it is intended in this paper to adapt a meta-heuristic algorithm with fitness approximation to solve the TSCFLP, in order to    reduce the computational time and guarantee correct convergence.

The Extreme learning machine (ELM) developed by \citet*{Huang2006489ELMO} strikes a balance between speed and generalization performance, and attracts more and more attention from various respects.
Compared with the ANN, the SVM and other traditional forecasting models, the ELM model retains the advantages of fast learning, good ability to generalize and convenience in terms of modeling.
In this paper,  a hybrid evolutionary algorithm with fitness approximation (HEA/FA) is proposed to obtain the optimal or neat-optimal solutions for the TSCFLP. In the frame of the algorithm, the extreme machine learning is used as surrogate model to approximate   fitness values of most individuals in the population.
The contribution of this paper is to introduce the basic structure of the evolutionary algorithm with the integration of the ELM fitness approximation when solving a discrete optimization problem.
The proposed algorithm uses the genetic operations (selection, crossover and mutation) as well as restarting strategy and a special local search operation with the best individual to update the current population. Computational results and comparison to state-of-the-art GA proposed in the literature demonstrate the effectiveness and efficiency of the proposed HEA/FA approach.

The remainder of this paper is organized as follows. In Section \ref{sec:SecMIP}, the TSCFLP is described in details and formulated as a mixed integer programming model. In Section \ref{sel:ELM}, the ELM algorithm is briefly introduced for the convenience of readers. Section \ref{sec:secHMA} presents the hybrid evolutionary framework of optimizing the TSCFLP using fitness function approximation via the ELM. Subsequently, the numerical tests and comparisons are carried out in Section \ref{sec:secNR}. Finally Section \ref{sec:secCon} summarizes some conclusions and points out future research.
\section{Problem  formulation\label{sec:SecMIP}}
The problem under study is defined as follows: a single product is produced at plants and then transported to depots, while both plants and depots  have limited capacities. From the depots the product is delivered to customers to satisfy their demands. The use of  plants/depots is accompanied by a fixed cost, while transportation from the plants to the customers via the depots results in a variable cost. The two-stage facility location problem aims to identify what plants and depots to use, as well as the product flows from the plants to the depots and then to the customers, such that the demands are met and the total cost is minimized. The problem under consideration in this paper is the same as that in the reference \citep*{Litvinchev2012}.

The parameters of this problem under study are defined as follows. Let $I, J, K$ be the sets of plants, depots and customers, respectively. Let
$f_i$ be the fixed cost  for plant $i\in I$;
$b_i$ be the capacity of plant $i \in I$;
$g_j$ be the fixed cost associated with depot $j \in J$;
$p_j$ be the capacity of depot $j \in J$;
$c_{ij}$ be the cost of transporting one unit of the product from plant $i \in I$ to depot $j \in J$;
$d_{jk}$ be the cost of transporting one unit of the product from depot $j\in J$ to customer $k \in K$;
$q_k$ be the demand of customer $k \in K$.

Let binary variables $y_i$, $i\in I$, be equal to 1 if and only if plant $i$ is chosen to be opened.
 Similarly, let $z_j $ be a binary variable which is equal to 1 if and only if depot $j\in J$ is opened, otherwise it is equal to 0.
 And let real variables $x_{ij}$ and $s_{jk}$ define the flow of products from plants to customers via depots. Using the above notations, the TSLFLP can be formulated as the following integer programming model.
\begin{eqnarray}
\mathrm{Minimize}\qquad \qquad  Z=\sum_{i \in I}f_iy_i+\sum_{j\in J}g_jz_j+\sum_{i\in I}\sum_{j\in J}c_{ij}x_{ij}+\sum_{j\in J}\sum_{k\in K}d_{jk}s_{jk}  \label{eq:objective}
\end{eqnarray}
subject to
\begin{align}
\sum_{j\in J}s_{jk}  &\geq   q_k  & \forall \ k  &\in K  \label{eqn:eq2}\\
\sum_{i\in I}x_{ij}  &\geq   \sum_{k \in K}s_{jk}   &  \forall \ j   &\in J \label{eqn:eq3}\\
\sum_{j\in J}x_{ij} &\leq b_iy_i & \forall \ i  &\in I \label{eqn:eq4}\\
\sum_{k \in K}s_{jk} &\leq p_jz_j & \forall \ j & \in J \label{eqn:eq5}\\
x_{ij}& \leq b_iz_j & \forall i \in I,& \ j \in J \label{eqn:aeq2}\\
x_{ij},\ s_{jk}\geq 0, &y_i, z_j \in \{0,1\}, &\forall \ i \in I, \ j &\in J,\ k\in K \label{eqn:eq6}
\end{align}

The objective function \eqref{eq:objective} includes the opening costs of the plants and the depots and the transportation costs in the two-stages. Constraints \eqref{eqn:eq2} ensure the demand from each customer must be met. Constraints \eqref{eqn:eq3} guarantee that the total amount of products transported from a depot must be at most the total transported to it from the plants. Constraints \eqref{eqn:eq4} and \eqref{eqn:eq5} gives the capacity limits for plants and depots and assure the flow only from plants and depots opened.
Constraints \eqref{eqn:aeq2} determine the upper bounds of  the decision variables $x_{ij}$.
At last, constraints \eqref{eqn:eq6} represent the boundary values of the decision variables.

The model considered in this paper is a mixed integer linear programming (MILP) model and is characterized by binary variables $y_i$, $z_j$ , and continuous variables $x_{ij}$, $s_{jk}$. Let $y=(y_i:i\in I)$, $z=(z_j:j\in J)$, $x=(x_{ij}:i\in I, j\in J)$ and $s=(s_{jk}:j\in J, k\in K)$. A solution to the TSCFLP problem shall be denoted by $\mathcal{X}=\{y,z,x,s\}$.

The capacitated facility location problem (CFLP) is equivalent to the second stage of the TSCFLP, so it  can be looked as a special case of the TSCFLP. Since the CFLP was proved to be NP-hard by \citet*{Cornuejols1990uncapacitated}, the TSCFLP under consideration is also NP-hard.
 Since polynomial time algorithms are both theoretically and practically impossible for the large-sized TSCFLP,  it is necessary to develop heuristic or meta-heuristic approaches.
In order to accelerate the solution speed, a hybrid
evolutionary algorithm with fitness approximation (HEA/FA) based on the extreme learning machine is developed to obtain near-optimal solutions in this paper.

\section{Extreme learning machine}\label{sel:ELM}
The Extreme learning machine (ELM), which was proposed by \citet*{Huang2006489ELMO}, is a relatively new learning algorithm of single hidden-layer feed-forward neural networks (SLFNs). It  selects hidden nodes at random and analytically determines the output weights of the SLFNs. Unlike conventional  gradient-based learning methods, the ELM does not need to tune the parameters between the input layer and the hidden layer iteratively. All hidden node parameters are assigned randomly, which are independent of the training data. Different from the  traditional learning algorithms for neural networks the ELM not only attempts to reach the smallest training error but also the smallest norm of output weights.

Let $n$, $h$ and $m$ denote the node numbers of the input layer, the hidden layer and the output layer, respectively. Let $R$ denote the set of real numbers and $R^n$ denote the $n$-dimensional real space. Let $A^T$ be the transpose of a matrix $A$.  Suppose there are $N$ arbitrary distinct training samples $(\textbf{x}_j,\textbf{t}_j)\in R^n \times R^m$, where $\textbf{x}_j$ is an   input vector in $R^n$ and $\textbf{t}_j$ is a target vector in $R^m$, and $1\le j\le N$.   The SLFNs with $h$ hidden nodes
 and activation function
$g(\cdot)$ are mathematically modelled as:
\begin{equation}\label{eq:eq_ELM1}
  \sum_{i=1}^{h}\beta_ig(\textbf{w}_i^T\textbf{x}_j+b_i) + \beta_{0}=\textbf{o}_j, \quad j=1,\ldots, N
\end{equation}
where $\textbf{w}_i=[\omega_{i1}, \omega_{i2},\ldots,\omega_{in}]^T$, $1\le i \le h$, is the weight vector connecting the $i$th hidden node and the input node $\textbf{x}_j$,  $\beta_i=[\beta_{i1},\beta_{i2},\ldots,\beta_{im}]^T$ is the weight vector connecting the $i$th hidden node and the output nodes,   $b_i\in R$ is the threshold of the $i$th hidden node, and $\beta_0$ is a bias parameter vector in $R^m$. Moreover,
 $\textbf{o}_j=[o_{j1},o_{j2},\ldots,o_{jm}]^T$, $1\le j \le N$, is the output vector of the SLFNs.

That the SLFNs with $h$ hidden nodes with activation function $g(\cdot)$ can approximate the $N$ samples with zero error means that $\sum_{j=1}^N\parallel \mathbf{o}_j-\mathbf{t}_j\parallel=0$ for a chosen norm, i.e., there exist $\beta_i, \beta_0 \in R^m$,$ \textbf{w}_i \in R^n$, $b_i\in R$,     such that
\begin{equation}\label{eq:eq_ELM2}
  \sum_{i=1}^{h}\beta_ig(\textbf{w}_i^T \textbf{x}_j+b_i)+\beta_0=\textbf{t}_j, \quad j=1,\ldots, N.
\end{equation}
The above $N$ equations can be rewritten compactly as
\begin{equation}\label{eq:eq_ELM3}
  \textbf{H}\beta=\textbf{T}
\end{equation}
where $\textbf{H}\in R^{N\times (h+1)}$ is the hidden-layer output matrix of the ELM
\begin{equation}\label{eq:eq_ELM4}
\begin{split}
  \textbf{H}(\textbf{w}_1,\ldots,\textbf{w}_h,b_1,\ldots,b_h,\textbf{x}_1,\ldots,\textbf{x}_N)\\
  =\begin{bmatrix}
   1&   g(\textbf{w}_1\cdot\textbf{x}_1+b_1) & \cdots & g(\textbf{w}_h\cdot\textbf{x}_1+b_h) \\
     \vdots & \cdots & \vdots \\
   1&  g(\textbf{w}_1\cdot\textbf{x}_N+b_1) & \cdots & g(\textbf{w}_h\cdot\textbf{x}_N+b_h) \\
   \end{bmatrix}_{N\times (h+1)}
   \end{split}
\end{equation}
$\beta=[\beta_0, \beta_1,\beta_2,\ldots,\beta_h]^T \in R^{(h+1)\times m}$ denotes the {\em matrix} of output weights, and $\textbf{T}=[\textbf{t}_1,\textbf{t}_2,\ldots,\textbf{t}_N]^T\in R^{N\times m}$ denote the target matrix of training data.
As described in \citet*{Huang2006489ELMO},  the $i$th column of $\textbf{H}$ is the $i$th hidden node output with respect to inputs $\textbf{x}_1,\textbf{x}_2,\ldots,\textbf{x}_N$. When the randomly selected input weights $\textbf{w}_i$ and the hidden layer thresholds $b_i,  $ $1\le i \le h$, are given, the network training is to  find  a least squares solution $\beta$ of the linear system. The least norm least-squares solution of the above linear system is
\begin{equation}\label{eq:eq_ELM5}
  {\beta}^*=\textbf{H}^{\dag}\textbf{T}
\end{equation}
where $\textbf{H}^{\dag}$is the Moore-Penrose pseudo inverse of the hidden layer output matrix $\textbf{H}$. Thus, the procedure of ELM can be described in three main steps as follows:

Algorithm ELM: Given a training set   ${S}=\{(\textbf{x}_i,\textbf{t}_i)|\textbf{x}_i\in\mathrm{\textbf{R}}^n,
\textbf{t}_i\in\mathrm{\textbf{R}}^m,i=1,\ldots,N \}$, activation function $g(\cdot)$, and  number of hidden nodes $h$,
\begin{enumerate}
  \item Generate the weight vector $\textbf{w}_i$ and bias $b_i$, $i=1,\ldots,h$ at random. {In particular, the weight vector $\textbf{w}_i$ is drawn from the standard uniform distribution on the open interval (-1,1), and the bias $b_i$ is picked from the uniform distribution over the interval (0,1).}
  \item Calculate the hidden layer output matrix $\textbf{H}$.
  \item Estimate the output weight $\beta^*=\textbf{H}^{\dag}\textbf{T}$.
\end{enumerate}

Since the ELM was developed, a number of researchers have already applied the ELM to many application fields, such as Medical/Biomedical, computer vision, image/video processing, chemical process, fault detection and diagnosis, and so on \citep*{Huang201532REVIEW}.
Compared with other state-of-the-art learning algorithms, including support vector machine (SVM) and deep learning, the ELM algorithm is fast and stable in training, easy in implementation, and accurate in modeling and prediction \citep*{Huang2011ELMSURVEY}.

\section{Hybrid evolutionary algorithm with fitness approximation for TSCFLP}\label{sec:secHMA}
In this section, we propose a   hybrid evolutionary algorithm (HEA) specialized for the TSCFLP   in detail. Besides incorporating successful elements of the previously   constructed effective heuristic algorithms \citep*{Fernandes2014200}, the proposed HEA gains pretty good balance between quality of solutions and computational time in two respects:
1) incorporating local search strategy to enhance the quality of the elitist individuals, and 2) introducing approximating fitness evaluation approach based on the ELM to alleviate the computational burden of the HEA. The idea of introducing fitness approximation has been proven to be effective and efficient in solving complex continuous optimization problems \citep*{Jin2002framework,Hacioglu2007FEA_NN,Zheng2015GA_SVM}.
\subsection{Framework of the proposed algorithm}
In this paper, the generic algorithm (GA) is employed to work as the search framework of the evolutionary computation algorithm owing to its success application to solving  the CFLP \citep*{Jaramillo2002LP_GA,Correa2004CPP_GA,Arostegui2006empirical_CFLP}. The  GA was inspired by biological mechanisms such as the genetic inheritance laws of Mendel and the natural selection concept of Charles Darwin, where the best adapted species survive \citep*{Holland1973genetic}. In order to use a GA to solve a specific problem, a suitable representation (or encoding) is firstly considered. The candidate solution of the problem are encoded by a set of parameters and referred to as chromosomes or individuals.
The word `individual' and the word `solution' are used interchangeably herein.
A fitness value is directly calculated by the objective function of the problem under consideration for each individual.
At each iteration, the GA evolves the current population of individuals  into a new population through selection, crossover and mutation processes. Only the fittest individuals are selected from the  current population and retained in next iteration \citep*{Gen2000genetic}. Regarding the stopping criterion, two commonly used termination conditions are used in the proposed algorithm herein. If the number of iterations $t$ exceeds the maximum number of iterations $t_{\mathrm{max}}$ or the best solution is not improved upon in $t_{\mathrm{nip}}$ successive iterations, the algorithm is terminated; otherwise, a new iteration is started and the iteration timer $t$ is increased by 1.

The proposed algorithm can be separated into six main parts: (1) initialization operation, (2) selection operation, (3) crossover operation, (4) mutation operation, (5) fitness approximation operation, and  (6) local search operation.
The procedure of the HEA with fitness approximation (HEA/FA) is shown in Algorithm \ref{alg:alg_Framework}.
The details of the proposed algorithm presented for the TSCFLP are elaborated as follows.

\begin{algorithm}[htp]
\begin{algorithmic}[1]\small

\caption{\label{alg:alg_Framework} The hybrid evolutionary algorithm with fitness approximation}
\STATE Set the value of the population size $N_p$, the maximum number of iterations $t_{\mathrm{max}}$, the maximum number of consecutive non-improvement iterations $t^*_{\mathrm{nip}}$;
\STATE Generate a population that consists of 2 individuals and $2N_p-2$ randomly generated individuals. The first two individuals are  obtained from the heuristics CBR (see Algorithm 2) and a rounding heuristics. Further,  the algorithm MIH (see Algorithm 3) is applied to correct and improve each individual to    guarantee its feasibility.
\STATE Evaluate all  $2N_p$ individuals using the exact fitness function.
Use these $2N_p$ individuals and their fitness values to form a training set $S$. Train the ELM model using $S$.
Select the first $N_p$ individuals that have smaller fitness values from the $2N_p$ individuals  to form the initial population $P$;
\STATE Set $t=1$ and $t_{\mathrm{nip}}=0$;
\WHILE {the stopping criterion is not met }
\FOR {$i:=1$ to $N_p$}
\STATE Select two individuals $X_{\imath}$, $X_{\jmath}$ from  population  $P$ randomly;  /\% {\em Selection Operation \uppercase\expandafter{\romannumeral1} } \%/
\STATE Apply Crossover operator to the selected solutions $X_{\imath}$ and $X_{\jmath}$ to obtain one offspring solution based on the adaptive crossover probability;  /\% {\em Crossover Operation} \%/
\STATE Add the offspring individuals to a candidate population $P'$ excluding any identical candidate;
\ENDFOR
\FOR {$i:=1$ to $|P'|$}
\STATE Apply mutation operator to the candidate population $P'$ based on the adaptive mutation probability, excluding any identical candidate; /\% {\em Mutation Operation} \%/
\ENDFOR
\STATE Estimate all individuals of the candidate population $P'$ with the ELM-based approximation model and find the best individual $X'_{\mathrm{best}}$ among these candidate individuals;
\STATE Generate a set $Q^*$ of all possible neighboring individuals relative to the best individual $X'_{\mathrm{best}}$  using {\em Local Search} strategy;
\STATE Estimate the individuals of the set $Q^*$ with the ELM fitness approximation model, and select the best individual from the set $Q^*$ to update the best individual $X'_{\mathrm{best}}$ based on their approximated fitness values;
\STATE Calculate the  best $N_e$ individuals of the population $P'$ with exact fitness function, where, $N_e\ll N_p$, for example, $N_e=10\%N_p$. Moreover, these $N_e$ individuals with exact fitness values are added into the training set to update the ELM model.
\STATE Choose the best individual $X^*$ from these individuals to update the   best individual $X_{\mathrm{best}}$ found so far if $Z(X^*)<Z(X_{\mathrm{best}})$, and set the parameter $t_{\mathrm{nip}}=0$; otherwise, do not update $X_{\mathrm{best}}$ and increase $t_{\mathrm{nip}}$ by 1.
\STATE Combine the populations $P$ and $P'$ to form a population set $\tilde{P}$, and select the best $N_p$ individuals from the set $\tilde{P}$ based on their fitness values to update population $P$; /\% {\em Selection Operation \uppercase\expandafter{\romannumeral2} }\% / 
\STATE Perform the restarting strategy on the population $P$;
\STATE $t=t+1$;
\ENDWHILE
\STATE Output the   best individual $X_{\mathrm{best}}$ found by the algorithm and its objective function value.
\end{algorithmic}
\end{algorithm}

\subsection {Encoding and  population initialization}
 The encoding of individuals used in our implementation is the same as that of many studies \citep*{Fernandes2014200,Rahmani201470CFLP_HFGA}.  Once the binary decision variables $y=(y_i:i\in I)$ and $z=(z_j:j\in J)$ of  the MILP model proposed in Section \ref{sec:SecMIP}  are determined,
the MILP model is simplified into a  linear programming (LP) model with the flow variables $x=(x_{ij}:i\in I,j\in J)$ and $s=(s_{jk}:j\in J,k\in K)$.  The simplified LP model can be easily handled by various commercial linear programming softwares. In this paper, Gurobi v6.5 is adopted to obtain the solution of the LP model optimally.
The real variable $x_{ij}$ is limited in the interval $[0, b_i], i=1,\ldots,I$ for each $j\in J$, and the values   of the  variable $s_{jk}$ lie in the interval $[0, p_j], j=1,\ldots,J$ for each $k\in K$.
The LP model is thus given by \eqref{eq:objective}-\eqref{eqn:eq5}.
In the population of individuals of the  HEA/FA algorithm, each individual contains  $|I|+|J|$ binary elements, which refer to all plants and depots in the TSCFLP.  The binary variables for one individual shall be collectively denoted by $X=(y,z)$. The values of these elements indicate whether corresponding facility would be selected. The value of 1 shows the corresponding plant (or depot) is selected to  open, otherwise it is closed. 

 Generally, the initial population of individuals is filled with $N_p$  randomly generated binary vectors. In order to guarantee an initial population with certain quality and diversity, two heuristics  are used to produce two individuals with special property.  The remaining individuals are initialized with random binary vectors.

For the first heuristic, a cost-benefit  index constructed by \citep*{Fernandes2014200} is used to decide which facility to open.
Both fixed and transportation costs are integrated to form the priority index for each plant or depot.
The  index is defined as the ratio of  the sum of  a facility's fixed cost and  its related transportation costs to its capacity.
For  plant $i\ (i \in I)$, its cost-benefit  index is stated by $({f_i+\sum_{j \in J}c_{ij}})/{b_i}$.
Once the   plants to open are determined,  the index of a depot $j\ (j \in J)$ is delivered by $({\sum_{i\in I}c_{ij}+g_j+\sum_{k\in K}d_{jk}})/{p_j}$.

In this paper, facilities with   smaller index are selected sequentially.
 Based on the above description, the heuristic is called cost-benefit ranking (CBR) algorithm, and its procedure is described in Algorithm \ref{alg:alg_CBR}.
\begin{algorithm}[htp]
\begin{algorithmic}[1]\small
\caption{\label{alg:alg_CBR} The CBR algorithm}
\STATE Initialize: $y_i:=0$, $\forall i \in I$, $z_j:=0$, $\forall j \in J$;
\STATE Calculate the cost-benefit index for each plant   using the expression $({f_i+\sum_{j \in J}c_{ij}})/{b_i}$;
\WHILE {$\sum_{i \in I}b_iy_i \leq \sum_{k \in K}q_k$ }

\STATE Select one plant $\imath$ with the smallest index from  the set of unconsidered plants;
\STATE $y_{\imath}=1$;
\STATE Delete  plant $\imath$ from the set of the unconsidered plants;
\ENDWHILE
\STATE Calculate the cost-benefit index for each depot   using the expression $({\sum_{i\in I}c_{ij}+g_j+\sum_{k\in K}d_{jk}})/{p_j}$;
\WHILE {$\sum_{j \in J}p_jz_j \leq \sum_{k \in K}q_k$}
\STATE Select one depot $\jmath$ with the smallest index from  the set of unconsidered depots;
\STATE $z_{\jmath}=1$;
\STATE Delete  depot $\jmath$ from the set of the unconsidered depots;
\ENDWHILE
\STATE Combine $y=(y_i:i\in I)$ and $z=(z_j:j\in J)$ into one complete individual.
\end{algorithmic}
\end{algorithm}

In the second heuristic, a rounding method for generating an initial individual of the population is presented. Firstly,
the optimal solution of the linear programming model \eqref{eq:objective}-\eqref{eqn:eq6} with  continuous variables $y$ and $z$ in the interval [0, 1] can be obtained by using Gurobi easily.
Then the rounding operation is performed on the binary variables $y_i$, $i\in I$ and $z_j$, $j\in J$ in the optimal solution.
If $y_i$ (or $z_j$)    is less than 0.5, it is set to 0; otherwise, it is set to 1. Once the rounding operation is done for each element in the solution, one individual with binary values can be obtained. The individual is regarded as the output of the heuristic.

However, the  individuals delivered by the above approaches  may be infeasible owing to the capacity constraints of plants and depots, i.e. $\sum_{i\in I}b_iy_i\geq \sum_{k\in K}q_k$ and $\sum_{j\in J}p_jz_j\geq\sum_{k\in K}q_k$, may not be met.
In our heuristic algorithm, an approach with the cost-benefit index is designed to correct the infeasible individuals obtained from the rounding heuristic.
%
The approach
is based on the  observation  in Theorem \ref{Lemma_sum}.

\begin{theorem}\label{Lemma_sum}
In any optimal solution for the TSCFLP, the following equality
\begin{equation}\label{eq:eq_lemma_sum}
  \sum_{i \in I}\sum_{j \in J}x_{ij}= \sum_{j\in J}\sum_{k \in K}s_{jk}=\sum_{k \in K}q_k
\end{equation}
must be met.
\end{theorem}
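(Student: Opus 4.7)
The plan is a standard ``tighten the flow constraints at optimum'' argument that exploits the positivity of the transportation costs $c_{ij}$ and $d_{jk}$ together with the direction of the inequalities \eqref{eqn:eq2} and \eqref{eqn:eq3}. I would aim to show that at any optimum both of these constraints are active for every index, and then sum.

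First, I would sum \eqref{eqn:eq2} over $k\in K$ to obtain $\sum_{j\in J}\sum_{k\in K}s_{jk}\ge \sum_{k\in K}q_k$, and sum \eqref{eqn:eq3} over $j\in J$ to obtain $\sum_{i\in I}\sum_{j\in J}x_{ij}\ge \sum_{j\in J}\sum_{k\in K}s_{jk}$. This immediately yields the chain of inequalities
\begin{equation*}
\sum_{i\in I}\sum_{j\in J}x_{ij}\;\ge\;\sum_{j\in J}\sum_{k\in K}s_{jk}\;\ge\;\sum_{k\in K}q_k,
\end{equation*}
so the claim reduces to showing that every inequality in the chain is in fact tight at an optimum.

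Next, I would argue tightness by a local perturbation. Suppose, for contradiction, that $(y,z,x,s)$ is optimal with $\sum_{j\in J}s_{j\bar k}>q_{\bar k}$ for some $\bar k\in K$. Since the $s$-variables are continuous and nonnegative, I can pick any $\bar\jmath$ with $s_{\bar\jmath\bar k}>0$ and decrease $s_{\bar\jmath\bar k}$ by $\varepsilon:=\min\bigl\{s_{\bar\jmath\bar k},\sum_{j\in J}s_{j\bar k}-q_{\bar k}\bigr\}>0$. Constraints \eqref{eqn:eq2}--\eqref{eqn:eq5} remain satisfied (those of type \eqref{eqn:eq3} only get easier), and the objective decreases by $d_{\bar\jmath\bar k}\varepsilon>0$ under the standing assumption that unit transportation costs are strictly positive; this contradicts optimality. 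Hence \eqref{eqn:eq2} must be tight at every $k$. A completely symmetric perturbation on $x_{\bar\imath\bar\jmath}$, using the positivity of $c_{\bar\imath\bar\jmath}$, forces \eqref{eqn:eq3} to be tight at every $j$.

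Finally, I would sum the tight versions of \eqref{eqn:eq2} and \eqref{eqn:eq3} to conclude \eqref{eq:eq_lemma_sum}. I do not expect any genuine obstacle; the only subtlety worth flagging is the implicit assumption that the unit transportation costs are strictly positive (if some $c_{ij}$ or $d_{jk}$ were zero, the statement should be interpreted as ``there exists an optimal solution satisfying \eqref{eq:eq_lemma_sum}'', which follows by the same perturbation argument since the reduction never increases cost).
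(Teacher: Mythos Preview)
Your proposal is correct and follows essentially the same approach as the paper: both argue that the demand constraints \eqref{eqn:eq2} and the flow-balance constraints \eqref{eqn:eq3} must hold with equality at optimum, and then sum. The paper frames this via slack variables $\mu_k$ and $\tilde\mu_j$ and asserts that the transportation-cost terms in the objective are minimal only when all slacks vanish; your explicit perturbation argument (decrease a single $s_{\bar\jmath\bar k}$ or $x_{\bar\imath\bar\jmath}$ and check feasibility) is a more rigorous execution of the same idea, and your remark about the implicit positivity assumption on $c_{ij},d_{jk}$ is a worthwhile caveat that the paper omits.
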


\begin{proof}
The second equality in \eqref{eq:eq_lemma_sum}  is considered first. Recalling the constraints \eqref{eqn:eq2}, an optimal solution must meet the following equality
$$
\sum_{j\in J}s_{jk}=q_k+\mu_k,\qquad k\in K,
$$
where $\mu_k\ge 0$, $k\in K$ are   slack variables. From the above equation and noting $q_k$, $k\in K$ are constants, it is clear that only if each $\mu_k=0$, $k\in K$, the cost term $\sum_{j\in J}\sum_{k\in K}d_{jk}s_{jk}$ appearing in the objective function \eqref{eq:objective} is minimal. Thus by setting $\mu_k=0$, $k\in K$, the second equality $\sum_{j\in J}\sum_{k \in K}s_{jk}=\sum_{k \in K}q_k$ is obtained.

In a very similar spirit, the first equality is proved. By constraints \eqref{eqn:eq3} and the above argument, an optimal solution must meet the following
$$
\sum_{i \in I}\sum_{j \in J}x_{ij}\ge  \sum_{j\in J}\sum_{k \in K}s_{jk}=\sum_{k \in K}q_k.
$$
Constraints \eqref{eqn:eq3} also implies that
$$
\sum_{i\in I}x_{ij}=\sum_{k\in K}s_{jk} +\tilde{\mu}_j, \qquad j\in J,
$$
where $\tilde{\mu}_j\ge 0, \ j\in J$ are slack variables. This makes it clear that only when each $\tilde{\mu}_j= 0, \ j\in J$, can the cost term $\sum_{i\in I}\sum_{j\in J}c_{ij}x_{ij}$ be minimal. Of course, in turn, this implies the first equality in equation \eqref{eq:eq_lemma_sum}.  The theorem has been proved.

\end{proof}

In our proposed algorithm,   infeasible individuals are   corrected based on the cost-benefit indexes of   facilities. Afterward, the corrected individuals are improved based on Theorem \ref{Lemma_sum}. The entire correction procedure is named {\em modified and improved heuristic} (MIH). Its detailed steps are showed in Algorithm \ref{alg:alg_MH}.

\begin{algorithm}[htp]
\begin{algorithmic}[1]\small
\caption{\label{alg:alg_MH} The MIH algorithm}
\STATE \textbf{Input}: A given individual $X$, the parameters of the TSCFLP, $y_i:=X_i$, $\forall i \in I$, $z_j:=X_{|I|+j}$, $\forall j \in J$,
the corresponding improved feasible individual $X^{c}\leftarrow$ [ ]
, and its two components $y_i^c\leftarrow y_i$, $\forall i \in I$, $z_j^c\leftarrow z_j$, $\forall j \in J$;
\FOR {$i \in I$}
\STATE $w_i^p\leftarrow (f_i+\sum_{j\in J}c_{ij})/b_i$;
\ENDFOR
\STATE Rank all plants according to their priority indexes $w_i^p,\forall i \in I$;
\STATE $i=1$;
\WHILE {$\sum_{i\in I}b_iy_i^c<\sum_{k\in K}q_k$}
\STATE Select next plant $i'$ with the minimum priority index $w^p_{i'}$ from the set of remaining plants;
\STATE $y_{i'}^c\leftarrow 1$, $i\leftarrow i+1$;
\ENDWHILE
\STATE $i\leftarrow |I|$;    \%Improve
 the corrected component  $y^c=(y_{i'}^c), i'\in I$
\WHILE {$\sum_{i \in I}b_iy_i^c>\sum_{k \in K}q_k$}
\STATE Select a plant $i'$ with the maximum priority index $w^p_{i'}$;
\STATE $y_{i'}^c\leftarrow 0$;
\IF {$\sum_{i\in I}b_iy_i^c<\sum_{k\in K}q_k$}
\STATE $y_{i'}^c\leftarrow 1$;  
\STATE \textbf{break}
\ENDIF
\STATE $i\leftarrow i-1$;
\ENDWHILE
\FOR {$j \in J$}
\STATE $w_j^d\leftarrow (\sum_{i\in I}c_{ij}y_j+g_j+\sum_{k\in K}d_{jk})/p_j$;
\ENDFOR
\STATE Rank all depots according to their priority indexes $w_j^d,\forall j \in J$;
\STATE $j=1$;
\WHILE {$\sum_{j\in J}p_jz_j^c<\sum_{k\in K}q_k$}
\STATE Select a depot $j'$ with the minimum priority index $w^d_{j'}$;
\STATE $z_{j'}^c\leftarrow 1$, $j\leftarrow j+1$;
\ENDWHILE
\STATE $j\leftarrow |J|$;     \%Improve the corrected component  $z_{j'}^c$, $\forall j'\in J$
\WHILE {$\sum_{j \in J}p_jz_j^c>\sum_{k \in K}q_k$}
\STATE Select a depot $j'$ with the maximum priority index $w^d_{j'}$;
\STATE $z_{j'}^c\leftarrow 0$;
\IF {$\sum_{j \in J}p_jz_j^c<\sum_{k \in K}q_k$}
\STATE $z_{j'}^c\leftarrow 1$;
\STATE \textbf{break}
\ENDIF
\STATE $j\leftarrow j-1$;
\ENDWHILE
\STATE \textbf{Output}: The improved feasible individual $X^c=[y^c \quad  z^c]$.
\end{algorithmic}
\end{algorithm}

Once the initial population of the HEA/FA algorithm is produced, their fitness values are calculated exactly. In this paper, the objective function value is  directly   regarded as the fitness value.
That is to say, the smaller the fitness value of one individual, the better its quality.
In order to produce a sufficient number of solutions for the training set of the  ELM, the size of of the training set  is chosen to be $2N_p$. Then the first $N_p$ solutions with   better fitness values are selected to constitute the initial population fed to the GA. Subsequently, this population is updated through  genetic search operations described below.

\subsection{Selection operation}
Selection is invoked two times  in the optimization process of the HEA/FA.
\emph{Selection for reproduction} (step 7 in Algorithm 1) is carried out before  genetic operations are performed. This is performed on a purely random strategy without bias  to filter two individuals from the population. Moreover, \emph{selection for the individuals of next generation} (step 19 in Algorithm 1)  is performed after the offspring individuals of new generation have been produced.
In order to maintain the diversity of the population, the best $N_p$ solutions are chosen from the  combination of parents and offspring

\subsection{ Crossover operation}
In our algorithm, the CX reproduction operator is adopted to finish the crossover operation. The CX reproduction operator proposed for the quadratic assignment problem  (\citet*{Merz_MA_CX1999,Merz_MA_CX2000}),  preserves the information contained in two parents in the sense that all alleles of the offspring  are taken either from the first or from the second parent.
The operator does not carry out any implicit mutation operation since an element that is assigned to position $\imath$ in the offspring solution is also assigned to position $\imath$ in one or both parent solutions.

However, the CX operator was designed for the solution with permutation sequence, and is difficult to tackle the binary solutions in our problem. The CX operator is modified to achieve the crossover operation for the binary encoding scheme.
The procedure of the CX operator can be delineated as follows. Firstly, all elements found at the same positions in the two parents are assigned to the corresponding positions in the offspring.
Then, a uniformly distributed random number between zero and one is generated. The random number is then compared with 0.5. If the random number is less than  0.5, the element in the selected  position of the offspring is set to equal to the element of the same position in the first parent; otherwise, its corresponding position is occupied by the element from the second parent.  Subsequently, the next no-assigned position  is processed in the same way until all positions have been occupied.  Figure \ref{fig:fig_CXcrossover} illustrates the manner in which the CX reproduction operator performs.
\begin{figure}[htp]
  \centering
  \includegraphics[width=2.8in]{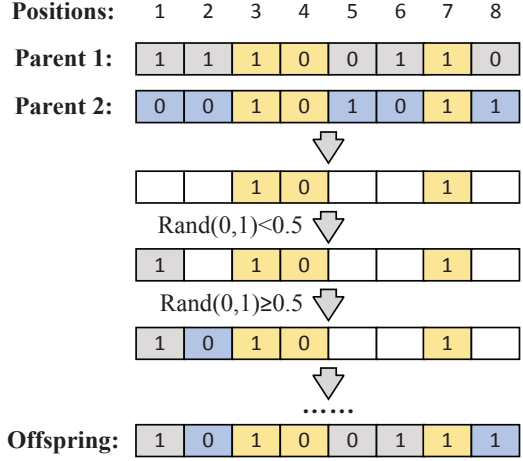}\\
  \caption{The CX reproduction operator}\label{fig:fig_CXcrossover}
\end{figure}

{In addition, the crossover probability $\rho_c$ is selected adaptively as in \citep*{Srinivas1994GA}.
Based on their strategy, the crossover probability and the mutation probability are increased when the population tends to get stuck at a local optimum and are decreased when the population is scattered in the search space of the algorithm. Meanwhile, if the fitness value of the selected individual is better than average fitness value of the population, the corresponding probability should be smaller to preserve the individual in next population; otherwise, the corresponding probability should be larger to generate new individuals in next population.}
  Let $f_{best}$ be the best fitness of the current population, $\bar{f}$ be the average fitness value of the population and $f'$ be the smaller of the fitness of the two individuals to be crossed. Then the crossover probability $\rho_c$  is calculated as follows
  \begin{equation}\label{eq:eq_pc}
  \rho_c=\left\{
\begin{aligned}
\rho_c^{min}+\left(\frac{f_{best}-f'}{f_{best}-\bar{f}}\right)\times(\rho_c^{max}-\rho_c^{min}), \quad f'<\bar{f}\\
\rho_c^{max},\quad f'\geq \bar{f}
\end{aligned}
\right.
\end{equation}
%
where $\rho_c^{max}$ is the maximum value of the crossover probability and $\rho_c^{min}$ is the minimum value of the crossover probability. When $\rho_c^{max}=0.9$ and $\rho_c^{min}=0.5$, the HEA/FA algorithm could provide the best performance based on  preliminary experiments.
\subsection{ Mutation operation}
Generally, the inversion of the mutant gene is commonly used in the GA for the binary optimization problems. Nevertheless, the offspring individual delivered by the inversion operation is probably infeasible due to the violation of the capacity constraints for the problem under study. For instance, assume an incumbent individual  just meets the requirement of the customers. When one plant is closed under the inversion operation, the production capacity of the remaining plants may be less than the demand of all customers. In this case, the candidate individual obtained by mutation becomes infeasible due to the violation of the capacity constraint.
To avoid the above mentioned problem of common mutation strategy, alternatively, the mutation operator used in the HEA/FA exchanges two randomly selected elements in a chosen individual.

The two components of the individual are handled by the exchange operator separately.
For the first $|I|$ binary values, two elements $\imath$ and $\jmath$ are randomly selected from the current individual and are swapped directly. The remaining $|J|$ binary values are operated in the same way. To describe the operation of the mutation operator, a simple example is shown in Figure \ref{fig:fig_mutation}.

\begin{figure}[htp]
  \centering
  \includegraphics[width=4in]{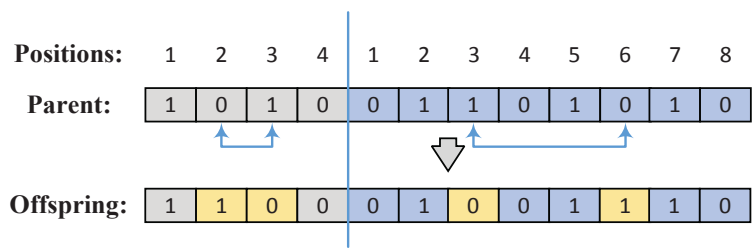}\\
  \caption{The Mutation operator
  }\label{fig:fig_mutation}
\end{figure}

The mutation probability adopts  the same adaptively updating strategy like equation \eqref{eq:eq_pc}. The expression for mutation probability $\rho_m$ is given as
\begin{equation}\label{eq:eq_pm}
  \rho_m=\left\{
\begin{aligned}
\rho_m^{min}+\left(\frac{f_{best}-f'}{f_{best}-\bar{f}}\right)\times(\rho_m^{max}-\rho_m^{min}), \quad f<\bar{f}\\
\rho_m^{max},\quad f\geq \bar{f}
\end{aligned}
\right.
\end{equation}
where $\rho_m^{min}$ and $\rho_m^{max}$ are the minimum and the maximum of the mutation probability, $f_{max}$ and $\bar{f}$ are the same as defined above, and $f $ is the fitness of the individual under mutation.
Based on the preliminary experiments, when $\rho_m^{max}$ and $\rho_m^{min}$ are set to 0.2 and 0.01 respectively, the algorithm could provide the best performance for the problem under study.
From the expression of $\rho_c$ and $\rho_m$, it is seen that $\rho_c$ and $\rho_m$ will get lower values for better individuals and get higher values for worse individuals.
\subsection{Local search strategy}
Generally, integrating a local search (LS) technique within a meta-heuristic will typically generate more competitive results. The main benefit of hybridizing evolutionary algorithms with a LS algorithm is to accelerate  the speed of convergence.
In our algorithm, the best individual of the population  in each iteration is improved by a local search.
The crucial idea of our LS strategy is generating  all possible  alternatives of the incumbent solution by inverting each element of  a solution, while each of these alternatives is corrected by the MIH algorithm in order to meet the feasibility.   If a candidate is better than the incumbent one in terms of the fitness value, the candidate individual will be accepted as a better solution.
The LS procedure is outlined in Algorithm \ref{alg:alg_LS}.
\begin{algorithm}[htp]
\begin{algorithmic}[1]\small
\caption{\label{alg:alg_LS} The Local Search Procedure}
\STATE Input: One individual $X$ and its fitness value;
\STATE The candidate individuals set $Q^*\leftarrow [\quad ]$;
\STATE $\kappa$=0;
\FOR {each element $\imath$  in the  individual $X$}
\STATE Perform inversion on the element $\imath$ to produce a new individual $X'$, while keeping other elements fixed;
\STATE Use the MIH algorithm to correct and refine the new individual $X'$;
\IF {the individual $X'$ is different from the incumbent one $X$}
\STATE Add the individual $X'$ to the set $Q^*$;
\STATE $\kappa$=$\kappa+1$;
\ENDIF
\ENDFOR
\STATE Estimate the individuals of the set $Q^*$ with the ELM fitness approximation;
\STATE Choose the best one from the set $Q^*$. If the best one is better than $X$, then update the individual $X$. Otherwise, $X$ keeps unchanged;
\STATE Output: the  individual $X$ and its fitness value.
\end{algorithmic}
\end{algorithm}

\subsection{Restarting strategy}
In order to maintain the diversity  of the population, it is common to use a restarting strategy in evolutionary algorithms \citep*{Ruiz2006461}. In the proposed HEA/FA algorithm,  a certain percentage of the population is substituted by randomly generated individuals with correction by the MIH algorithm. {Whenever the number of identical binary elements   in  the best individual and in the worst individual is more than  or equal to 0.9($|I|+|J|$),}
the restarting operation is performed on the population. The population is sorted in non-decreasing
 order of objective function values. Then the first 90\% of individuals from the sorted list are retained in the population. The remaining 10\% of individuals are replaced with newly generated individuals at random with correction by the MIH algorithm. 

\subsection{Fitness evaluation}
To avoid prematurely converging to a false optimum, the approximation model needs to be used together with the exact fitness function. For the individuals delivered by the genetic operation, their fitness values are approximated by the ELM-model. Since the best individual ($X_{\mathrm{best}}$) found so far   plays a central role in ensuring the whole population to converge to a true optimum, the HEA/FA always calculates the fitness of the best individual using the exact fitness function to ensure a correct convergence.
Moreover, the set of the elite individuals with better fitness values are selected from the current population. In this paper, the proportion of the elites is set to $10\%$.
That is to say, the number of  elite individuals $N_e= 10\%\times N_p$.

In each  generation of the HEA/FA, the true fitness values are calculated for the best individual and the elites in the population. These individuals in turn are added to the  the training set $S$ to retrain and refine the ELM. As the evolution of the training set, the deviation between the exact fitness value and the approximated fitness value is expected to decrease, so that the evolutionary algorithm evolves better estimations produced by the ELM model.

\section{Numerical results}\label{sec:secNR}
In this section, the numerical experiments  are performed for evaluating the performance of the proposed algorithm.
All the reported computational experiments presented hereunder were conducted on a  personal computer with an Intel i7 3.6 GHz  processor and 8GB of RAM. The performance of each algorithm is measured by the relative percentage deviation (RPD) defined by the equation
\begin{equation}\label{eq:eq_rpd}
  RPD(\%)=(Z(\mathrm{alg})-Z(\mathrm{LB}))\times100/Z(\mathrm{LB})
\end{equation}
where $Z(\mathrm{alg})$ is the solution value delivered by a  specific algorithm and $Z(\mathrm{LB})$ is the lower bound of the corresponding instance. $Z(\mathrm{LB})$ can be obtained by solving the LP model with the continuous decision variables $y_i $ and $z_j$, $i \in I, j \in J$.

For assessing the performance of the HEA/FA algorithm, a well-defined set of instances developed by \citet*{Fernandes2014200} were used in this paper.
The benchmark instances consist of five classes of instances by varying seven parameters of the considered problem: $b_i,f_i,c_{ij},p_j,g_j,d_{jk}, q_k$. The value intervals of these parameters are outlined in Table \ref{tab:tab_bdata}. For each instance, the values of the seven parameters are randomly generated from an uniform random distribution. In \citep*{Fernandes2014200}, the number of plants was set to 50 and 100, respectively.
In all instances, the number of depots is twice as much as the number of  plants,    and the number of  customers is twice as much as that of depots, so that $|K|=2|J|=4|I|$. For simplicity, the number of plants is used to indicate the size of the considered instances like in \citep*{Fernandes2014200}.
 \begin{table}[htbp]\scriptsize
 \centering
  \caption{Value intervals of the five classes of instances introduced by \citet*{Fernandes2014200}}
    \begin{tabular}{rrrrrr}
    \toprule
    Parameter & Class 1 & Class 2 & Class 3 & Class 4 & Class 5 \\
    \midrule
    $b_i$    & [2B 5B] & [5B 10B] & [15B 25B] & [5B 10B] & [5B 10B] \\
    $f_i$    & $[2\times10^4$  $3\times10^4]$ & $[2\times10^4$  $3\times10^4]$ & $[2\times10^4$  $3\times10^4]$ & $[2\times10^4$  $3\times10^4]$ & $[2\times10^4$  $3\times10^4]$ \\
    $c_{ij}$   & [35 45] & [35 45] & [35 45] & [50 100] & [35 45] \\
    $p_j$    & [2P 5P] & [5P 10P] & [15P 25P] & [5P 10P] & [5P 10P] \\
    $g_j$    &$[8\times10^3$  $1.2\times10^4]$ & $[8\times10^3$  $1.2\times10^4]$ & $[8\times10^3$  $1.2\times10^4]$ & $[8\times10^3$  $1.2\times10^4]$ & $[8\times10^3$  $1.2\times10^4]$ \\
    $d_{jk}$   & [55 65] & [55 65] & [800 1000] & [50 100] & [800 1000] \\
    $q_k$    & [10 20] & [10 20] & [10 20] & [10 20] & [10 20] \\
    \bottomrule

    \multicolumn{6}{c}{\leftline{ Note: $B=\sum_{k \in K}q_k/|I|$, $P=\sum_{k \in K}q_k/|J|$.}}
    \end{tabular}%
  \label{tab:tab_bdata}%
\end{table}%

The proposed HEA/FA approach was coded in Matlab 7.14 under a Windows 7 environment. The parameters selection can significantly affect the quality of a algorithm, in terms of solution performance and computational time. In our algorithm, the following three parameters are necessary to be considered: $N_p$, $t_{\mathrm{max}}$ and $t_{\mathrm{nip}}$.
  Based on the preliminary tests, when $t_{\mathrm{max}}=200$ and $t_{\mathrm{nip}}=50$, the HEA/FA could provide good performance within a reasonable computational time.  For the population size, the following candidate values are tuned for our algorithm $N_{p}$=40, 50, 60, 70 and 80. A set of  test instances with 50 plants were randomly generated in the same way as in \citep*{Fernandes2014200}. These test instances were solved by these HEA/FAs with different population sizes.
  The computational results were examined  by means of  the one-way Analysis of Variance (ANOVA) test. The means plot and 99\% confidence level Tukey's Honestly Significant Difference (HSD) intervals for these 5 candidate values of $N_p$ are described in Figure \ref{fig:fig_pnum}.
  As can be seen from the figure, the Tukey's HSD intervals for different $N_p$ values have overlapping.  which indicates there is no significant difference among the five $N_p$ values. However, when the population size is 60, the mean results delivered by the HEA/FA are better compared with the other values. Therefore, the parameter $N_p$ is set to 60 in the following computations.
\begin{figure}
  \centering
  \includegraphics[width=3.5 in]{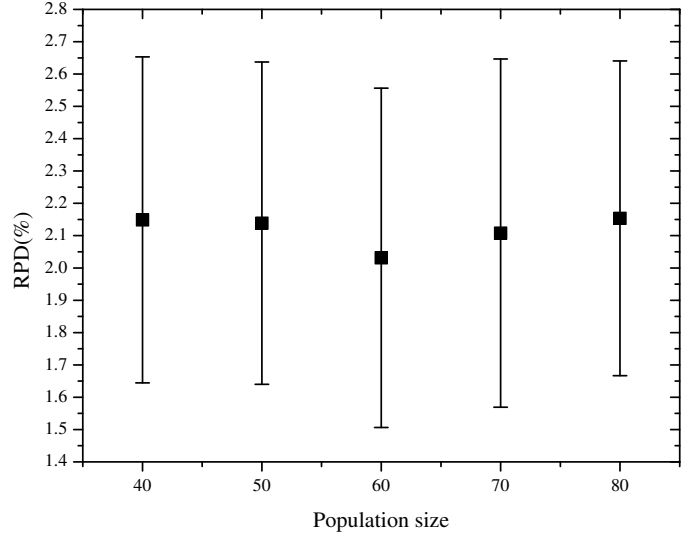}\\
  \caption{Means plot and 99\% confidence level Tukey's HSD intervals for population size $N_p$}\label{fig:fig_pnum}
\end{figure}

Due to the stochastic nature of the HEA/FA approach, it is run five times for a given instances to reach  reliable results. The best objective function value among 5 runs and the average objective value are recorded for our algorithm. In addition, our results are compared with the state-of-the-art GA proposed by \citet*{Fernandes2014200} for the TSCFLP.

Table \ref{tab:tab_r50} reports the computational results of the instances with 50 plants. In the table, when the objective value of HEA/FA is better than that of GA, the corresponding value is in boldface font.
It can be observed that the HEA/FA gives good solutions in a reasonable computational time, and the average RPD values delivered by the HEA/FA are slightly larger than that of the GA. Remarkably, the HEA/FA gives better solutions for six instances compared with the GA.

Figure \ref{fig:fig_r50} shows the performances of the two algorithms under different problem classes.  In the figure, the RPD value is the mean of five problem instances for each class.
It is clearly seen from Figure \ref{fig:fig_r50} that the two algorithms show similar performance in solving the instances of Class 1, Class2 and Class 5. The average RPD values obtained by the HEA/FA are larger than that of GA in the instances of Class 3 and Class 4.

\begin{table}[htbp]
\tiny
  \centering
  \caption{Computational results obtained by GA and HEA/FA for instances with 50 plants}
    \begin{tabular}{rrrrrrrrrr}
    \toprule
    Class & Instance & LB    &  \multicolumn{2}{c}{ GA}    & \multicolumn{5}{c}{ HEA/FA}  \\
      \cmidrule[0.05em](r){4-5}
    \cmidrule[0.05em](lr){6-10}

          &       &       & Z     & RPD & $Z_{\mathrm{min}}$  & $Z_{\mathrm{average}}$ & ${\mathrm{RPD}}_{\mathrm{min}}$ & ${\mathrm{RPD}}_{\mathrm{average}}$ & Time(s) \\
           \midrule
    1     & 1     & 721209.6 & 722178.0 & 0.13  & 722178 & 722946.0 & 0.13  & 0.24  & 416.17 \\
          & 2     & 730451.6 & 733350.4 & 0.40  & \textbf{732737} & \textbf{733065.4} & 0.31  & 0.36  & 492.78 \\
          & 3     & 731885.3 & 733664.2 & 0.24  & \textbf{733473} & \textbf{733650.4} & 0.22  & 0.24  & 316.08 \\
          & 4     & 721515.0 & 727325.0 & 0.81  & \textbf{725387} & \textbf{725741.0} & 0.54  & 0.59  & 455.55 \\
          & 5     & 713633.8 & 719512.6 & 0.82  & 719771 & 720559.2 & 0.86  & 0.97  & 463.69 \\
                &       &       &       &       &       &       &       &       &  \\
    2     & 1     & 479860.2 & 492747.0 & 2.69  & 492987 & 493381.0 & 2.74  & 2.82  & 230.84 \\
          & 2     & 483072.2 & 494205.0 & 2.30  & 494366 & 494719.0 & 2.34  & 2.41  & 199.11 \\
          & 3     & 486018.5 & 496435.4 & 2.14  & \textbf{495089} & \textbf{495204.2} & 1.87  & 1.89  & 167.28 \\
          & 4     & 482374.6 & 492215.8 & 2.04  & 492349 & 492611.8 & 2.07  & 2.12  & 162.13 \\
          & 5     & 474803.3 & 489711.0 & 3.14  & \textbf{489625} & \textbf{489625.0} & 3.12  & 3.12  & 170.57 \\
                &       &       &       &       &       &       &       &       &  \\
    3     & 1     & 2608799.8 & 2688951.0 & 3.07  & 2690669 & 2694875.2 & 3.14  & 3.30  & 117.09 \\
          & 2     & 2616252.3 & 2697803.8 & 3.12  & 2702580 & 2704217.2 & 3.30  & 3.36  & 116.67 \\
          & 3     & 2598276.5 & 2679038.0 & 3.11  & 2684051 & 2686436.2 & 3.30  & 3.39  & 161.80 \\
          & 4     & 2612533.8 & 2692662.0 & 3.07  & 2695644 & 2696585.2 & 3.18  & 3.22  & 148.86 \\
          & 5     & 2568855.9 & 2646182.0 & 3.01  & 2650230 & 2650844.2 & 3.17  & 3.19  & 158.15 \\
    4     & 1     & 525294.1 & 541803.0 & 3.14  & 542944 & 543897.2 & 3.36  & 3.54  & 216.81 \\
          & 2     & 526911.7 & 539178.0 & 2.33  & 541357 & 542271.2 & 2.74  & 2.92  & 199.39 \\
          & 3     & 532592.3 & 546738.4 & 2.66  & \textbf{546365} & \textbf{546534.8} & 2.59  & 2.62  & 247.29 \\
          & 4     & 529372.0 & 542750.0 & 2.53  & 543287 & 544224.6 & 2.63  & 2.81  & 261.75 \\
          & 5     & 521470.1 & 537806.4 & 3.13  & 538047 & 538047.8 & 3.18  & 3.18  & 182.24 \\
                &       &       &       &       &       &       &       &       &  \\
    5     & 1     & 2743547.2 & 2776346.8 & 1.20  & \textbf{2775499} & 2776499.2 & 1.16  & 1.20  & 199.23 \\
          & 2     & 2752021.4 & 2781496.0 & 1.07  & 2782534 & 2783844.0 & 1.11  & 1.16  & 259.72 \\
          & 3     & 2737769.1 & 2767842.2 & 1.10  & 2771044 & 2772909.0 & 1.22  & 1.28  & 202.71 \\
          & 4     & 2748216.2 & 2777619.0 & 1.07  & 2778388 & 2779207.0 & 1.10  & 1.13  & 203.32 \\
          & 5     & 2702350.0 & 2736077.8 & 1.25  & 2737843 & 2738494.6 & 1.31  & 1.34  & 182.20 \\
    \multicolumn{2}{c}{Average} &       &       & 1.98  &       &       & 2.03  & 2.10  & 237.26 \\
    \bottomrule
    \end{tabular}%
  \label{tab:tab_r50}%
\end{table}%

\begin{figure}
  \centering
  \includegraphics[width=3.5 in]{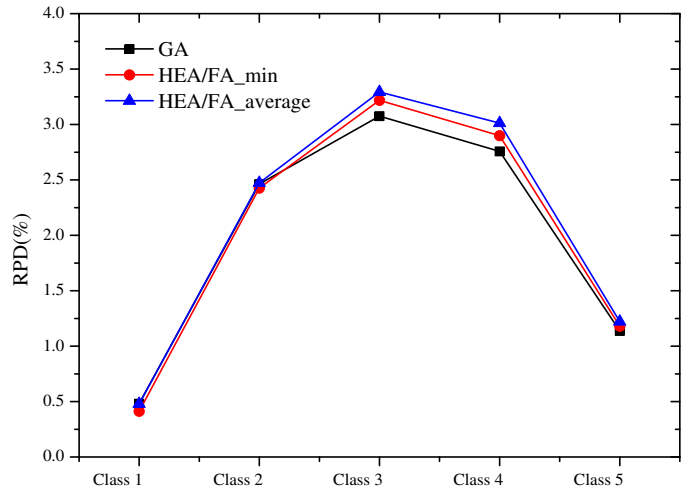}\\
  \caption{Average RPD values delivered by GA and HEA/FA for instances with 50 plants }\label{fig:fig_r50}
\end{figure}

The results of problem instances with 100 plants obtained by the two algorithms are listed in Table \ref{tab:tab_r100}.
In the table, the average RPD value of the best solution among five runs given by the HEA/FA is 0.85\%, which is less than the average RPD value of 0.96\%  given by the GA. Moreover, the average RPD of the mean objective function value delivered by the HEA/FA is 0.94\%,which is slightly less than the one of GA. There are 12 instances in which the quality of the average solutions obtained by HEA/FA is better than that of GA. And the best solutions yielded by the HEA/FA are better than that of the GA in 14 out of 25 instances. Note that these objective  function values are marked by the bold-faced font.
In the same way, the average RPD values given by the two algorithms under different problem classes are plotted in Figure \ref{fig:fig_r100}. The HEA/FA significantly outperforms the GA in the problem instances with Class 1, Class 2 and Class 5. But the proposed HEA/FA is   not as good at handling the problem instances with Class 3 and Class 4.

Furthermore, the computational time of the HEA/FA is listed in Table \ref{fig:fig_r50} and \ref{fig:fig_r100}. The longest time of HEA/FA is 492.78 sec  and 1514.37 sec in solving problem instances with 50 plants and 100 plants, respectively. Most computational time is still consumed by exact fitness evaluation in the search process.
And the computational time of the HEA/FA is acceptable for obtaining a good quality solution to the underlying problem.
Based on the above analysis, the HEA/FA could be regarded as an alternative algorithm in solving the TSCFLP, especially for  large-sized problems.

\begin{table}[htbp]
\tiny
  \centering
  \caption{Computational results obtained by GA and HEA/FA for instances with 100 plants}
    \begin{tabular}{rrrrrrrrrr}
    \toprule
    Class & Instance & LB    &  \multicolumn{2}{c}{ GA}    & \multicolumn{5}{c}{ HEA/FA}  \\
      \cmidrule[0.05em](r){4-5}
    \cmidrule[0.05em](lr){6-10}

          &       &       & Z     & RPD & $Z_{\mathrm{min}}$  & $Z_{\mathrm{average}}$ & ${\mathrm{RPD}}_{\mathrm{min}}$ & ${\mathrm{RPD}}_{\mathrm{average}}$ & Time(s) \\
           \midrule
    1     & 1     & 1475951.6 & 1484057.40 & 0.55  & \textbf{1480295} & \textbf{1480769} & 0.29  & 0.33  & 1341.67 \\
          & 2     & 1462736.1 & 1477503.40 & 1.01  & \textbf{1466719} & \textbf{1469070} & 0.27  & 0.43  & 1514.37 \\
          & 3     & 1492162.6 & 1497213.00 & 0.34  & \textbf{1495634} & 1499353 & 0.23  & 0.48  & 1481.83 \\
          & 4     & 1459076.4 & 1466182.20 & 0.49  & \textbf{1462752} & \textbf{1463128} & 0.25  & 0.28  & 1122.33 \\
          & 5     & 1490741.8 & 1500689.20 & 0.67  & \textbf{1493341} & \textbf{1494275} & 0.17  & 0.24  & 1501.11 \\
           &       &       &       &       &       &       &       &       &  \\
    2     & 1     & 970908.5 & 979526.60 & 0.89  & \textbf{977047} & \textbf{977685.2} & 0.63  & 0.70  & 944.83 \\
          & 2     & 965908.5 & 973034.40 & 0.74  & \textbf{970486} & \textbf{971307.6} & 0.47  & 0.56  & 908.30 \\
          & 3     & 975499.7 & 989362.00 & 1.42  & \textbf{977461} & \textbf{977912} & 0.20  & 0.25  & 922.00 \\
          & 4     & 973019.1 & 978502.00 & 0.56  & \textbf{978484} & 978727 & 0.56  & 0.59  & 1083.61 \\
          & 5     & 941567.0 & 952067.80 & 1.12  & \textbf{949666} & \textbf{950594.8} & 0.86  & 0.96  & 1159.33 \\
           &       &       &       &       &       &       &       &       &  \\
    3     & 1     & 5213566.2 & 5298518.40 & 1.63  & 5313347 & 5319194 & 1.91  & 2.03  & 855.68 \\
          & 2     & 5191320.9 & 5278225.40 & 1.67  & 5293152 & 5294478 & 1.96  & 1.99  & 1220.48 \\
          & 3     & 5145991.1 & 5227517.00 & 1.58  & 5237805 & 5240865 & 1.78  & 1.84  & 1193.12 \\
          & 4     & 5225601.2 & 5316646.00 & 1.74  & 5328880 & 5330736 & 1.98  & 2.01  & 968.64 \\
          & 5     & 5163182.1 & 5251934.40 & 1.72  & 5265931 & 5270409 & 1.99  & 2.08  & 942.29 \\
    4     & 1     & 1052171.8 & 1060799.00 & 0.82  & 1061704 & 1063117 & 0.91  & 1.04  & 878.64 \\
     &       &       &       &       &       &       &       &       &  \\
          & 2     & 1043552.6 & 1053288.60 & 0.93  & \textbf{1052133} & \textbf{1052876} & 0.82  & 0.89  & 672.60 \\
          & 3     & 1050682.6 & 1070421.00 & 1.88  & \textbf{1061159} & \textbf{1064202} & 1.00  & 1.29  & 877.26 \\
          & 4     & 1044570.7 & 1054638.00 & 0.96  & 1058764 & 1061535 & 1.36  & 1.62  & 797.26 \\
          & 5     & 1053868.7 & 1060621.00 & 0.64  & 1063793 & 1065005 & 0.94  & 1.06  & 843.02 \\
           &       &       &       &       &       &       &       &       &  \\
    5     & 1     & 5486098.3 & 5512662.00 & 0.48  & 5519416 & 5520099 & 0.61  & 0.62  & 967.57 \\
          & 2     & 5461680.0 & 5487604.20 & 0.47  & \textbf{5484270} & \textbf{5485724} & 0.41  & 0.44  & 1017.10 \\
          & 3     & 5425391.4 & 5458935.80 & 0.62  & 5460046 & 5464792 & 0.64  & 0.73  & 1371.96 \\
          & 4     & 5494811.0 & 5523513.80 & 0.52  & 5526703 & 5526910 & 0.58  & 0.58  & 1084.43 \\
          & 5     & 5442621.0 & 5468022.80 & 0.47  & \textbf{5465944} & \textbf{5467966} & 0.43  & 0.47  & 1162.63 \\
    \multicolumn{2}{c}{Average} &       &       & 0.96  &       &       & 0.85  & 0.94  & 1073.28 \\
    \bottomrule
    \end{tabular}%
  \label{tab:tab_r100}%
\end{table}%

\begin{figure}
  \centering
  \includegraphics[width=3.5 in]{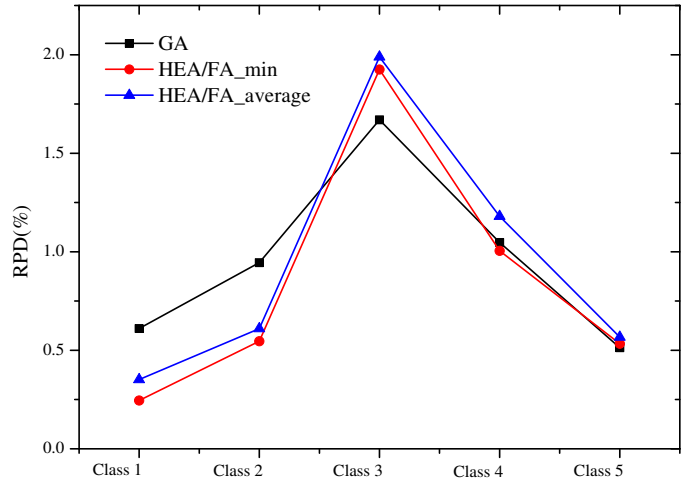}\\
  \caption{
  Average RPD values delivered by GA and HEA/FA for instances with 100 plants }\label{fig:fig_r100}
\end{figure}

\section{Conclusions}\label{sec:secCon}
In this paper, a hybrid evolutionary algorithm frame with fitness approximation  was proposed to solve TSCFLP.  To improve the performance of the algorithm, the following tricks are adopted:  two heuristics are introduced during the population initialization, and a local search  based on inversion operation is integrated to refine the best individual at each iteration. In addition, probabilistically adaptive crossover and mutation are used to prevent the algorithm from getting stuck at a local optimal solution. In order to reduce the  time consumption in fitness evaluation operation, the fitness values of most individuals are calculated by the surrogate model based on extreme learning machine. However, the elite solutions are assessed by the exact fitness evaluation for getting a proper balance between accuracy and time efficiency of fitness evaluation. The performance of the proposed algorithm was tested and evaluated on two sets of benchmark instances.  The computational results show that the HEA/FA in general delivers good results for the TSCFLP in a reasonable time, especially in large-sized instances. Compared with the state-of-the-art genetic algorithm, the HEA/FA provides better solutions  for   large-sized problem.

Further research includes the consideration of additional characteristics in the problem setting to make it even more realistic. For instance, some loading/discharging operations such as  handling costs studied by \citet*{Li2014957} in   facilities might prove useful. Additionally, traffic situations such as asymmetrical transport distance matrices could be considered in more complex scenarios.
For the HEA/FA, the probability of applying it to solve other combinatorial optimization problems could be developed, such as production scheduling problems and facility layout problems.
\section*{Acknowledgment}
This work is supported by the National Natural Science
Foundation of China (No. 51405403) and
China Postdoctoral Science Foundation funded project (No. 2015M582566).



\end{document}